\title{Short geodesics in hyperbolic 3-manifolds}
\author{William Breslin}
\address{Department of Mathematics, University of Michigan, Ann Arbor, MI 48109}
\email{breslin@umich.edu}
\urladdr{http://www-personal.umich.edu/~breslin/index.html}
\newtheorem{lemma}{Lemma}
\newtheorem{claim}{Claim}
\newtheorem{thm}{Theorem}
\newcommand{\bd}{\partial}
\newcommand{\h}{\mathbb{H}}
\newcommand{\T}{\tilde{T}}
\newcommand{\tS}{\tilde{S}}
\begin{document}

\begin{abstract}    For each $g \ge 2$, we prove existence of a computable constant $\epsilon(g) > 0$ such that if $S$ is a strongly irreducible Heegaard surface of genus $g$ in a complete hyperbolic 3-manifold $M$ and $\gamma$ is a simple  geodesic of length less than $\epsilon(g)$ in $M$, then $\gamma$ is isotopic into $S$.
\end{abstract}

\maketitle


\section{Introduction}\label{intro}

Let $M$ be a hyperbolic 3-manifold and let $S$ be a surface embedded in $M$.  A simple closed curve $\gamma$ in $M$ is said to be \textit{unknotted} with respect to $S$ if $\gamma$ can be isotoped into $S$.  A finite collection $\Gamma = \{ \gamma_1, ... ,\gamma_n \}$ of simple closed curves is \textit{unlinked} with respect to $S$ if there is a collection of disjoint embedded surfaces $S_1, ... , S_n$ which are isotopic to $S$ and with $\gamma_i \subset S_i$ for all $i$.  One can ask if short geodesics are unknotted or unlinked with respect to fibers, Heegaard surfaces, or leaves of a foliation.

It follows from work of Otal \cite{Otal} that short geodesics in a hyperbolic mapping torus are unlinked with respect to the fiber, where ``short" depends only on the genus of the fiber.

\begin{thm}[Otal]\label{Otal}
For every $g$ there is a constant $\epsilon > 0$ such that the following holds: If $M$ is a closed 3-dimensional hyperbolic mapping torus with genus $g$ fiber and $\Gamma$ is the collection of simple closed geodesics in $M$ which are shorter than $\epsilon$, then $\Gamma$ is unlinked with respect to $S$.
\end{thm}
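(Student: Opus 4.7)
The plan is to combine Thurston's simplicial hyperbolic surface sweepouts with the geometry of Margulis tubes. The argument has three main ingredients.

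First, because $M$ is a closed hyperbolic mapping torus with fiber $F$ of genus $g$, there is a one-parameter family of embedded surfaces $F_t$ ($t \in S^1$) sweeping out $M$, each isotopic to the fiber. Fix a one-vertex triangulation $\tau$ of $F$ and replace each $F_t$ by a simplicial hyperbolic surface $\Sigma_t$ homotopic to $F_t$: pick an image for the vertex of $\tau$, straighten each edge to a geodesic arc in $M$, and then straighten each triangle to a totally geodesic triangle. Away from the vertex, such a surface has intrinsic curvature at most $-1$, so Gauss--Bonnet yields the uniform area bound $\operatorname{Area}(\Sigma_t) \le 2\pi(2g-2)$.

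Second, choose $\epsilon$ below the $3$-dimensional Margulis constant, so that each simple closed geodesic $\gamma$ of length at most $\epsilon$ is the core of an embedded Margulis tube $T_\gamma$ of tube radius $R(\ell(\gamma))$, with $R(\ell) \to \infty$ as $\ell \to 0$. The key geometric lemma is an area comparison: if $R(\epsilon)$ is sufficiently large relative to the area bound $4\pi(g-1)$, then every simplicial hyperbolic surface of genus $g$ meets $T_\gamma$ in a disjoint union of embedded meridional disks together with at most one embedded annulus whose core is isotopic to the core of $T_\gamma$. Any other intersection component would contain a subdisk whose area grows with $R(\epsilon)$ and ultimately exceeds $4\pi(g-1)$.

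Third, enumerate the short geodesics as $\gamma_1, \ldots, \gamma_n$ with pairwise disjoint Margulis tubes $T_1, \ldots, T_n$. For each time $t$ and each $i$, cap off the meridional-disk components of $\Sigma_t \cap T_i$ with disks in $T_i$ to obtain a surface $\Sigma_t^{(i)}$, still isotopic to the fiber, whose intersection with $T_i$ is either empty or a single core-parallel annulus. In the first case $\Sigma_t^{(i)}$ is disjoint from $\gamma_i$; in the second $\gamma_i$ is isotopic into $\Sigma_t^{(i)}$. Since the sweepout fills $M$, for each $i$ the set of times for which the annulus is present contains a nonempty open subset $U_i \subset S^1$. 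Choosing distinct $t_i \in U_i$ and performing innermost-disk/annulus cleanups supported outside $T_1 \cup \cdots \cup T_n$ yields the desired pairwise disjoint collection of fibers each containing one $\gamma_i$.

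The main obstacle is the last piece: passing from the pointwise structural statement — each $\Sigma_t$ is either disjoint from $\gamma_i$ or contains it — to a simultaneous disjoint arrangement of $n$ fibers, each holding exactly one $\gamma_i$. The subtlety is that the candidate surfaces are drawn from a single sweepout, so their mutual intersections outside the Margulis tubes must be removed by innermost curve arguments, and one has to check that this cleanup never unknots any $\gamma_i$ from its host surface. This bookkeeping is the combinatorial heart of Otal's argument, and it is the step that genuinely requires the fibered structure of $M$ rather than just the bounded-area sweepout.
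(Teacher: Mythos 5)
The paper does not actually prove Theorem \ref{Otal}; it quotes it from Otal and only proves the analogous statement for strongly irreducible Heegaard surfaces, so the fair comparison is with what the bounded-area technique is shown to deliver there (Lemma \ref{lem1}). Measured against that, your proposal has two genuine gaps. First, a simplicial hyperbolic surface obtained by straightening a triangulation of a fiber is only \emph{homotopic} to the fiber and is in general not embedded. Every subsequent step of your argument --- reading off the components of $\Sigma_t \cap T_i$, capping off ``meridional disks'' inside $T_i$ to get a surface ``still isotopic to the fiber,'' and the final innermost-disk cleanup --- requires an embedded surface isotopic to the fiber. Bounded area and embeddedness do not come together for free; this is exactly why the present paper invokes Pitts--Rubinstein to get embedded bounded-area sweepouts in the Heegaard setting. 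For fibers one would need an embedded bounded-area sweepout (e.g., via harmonic maps or minimal surfaces), and you have not supplied one.

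Second, your ``key geometric lemma'' claims far more than an area bound can give. An embedded surface of area at most $4\pi(g-1)$ can meet a very fat Margulis tube in components that are neither meridian disks nor core-parallel annuli --- for instance boundary-parallel annuli, or disks whose boundaries are inessential in $\bd T$ --- and these can have arbitrarily small area, so no subdisk of area growing with $R(\epsilon)$ is forced. What the area comparison actually yields (compare Claim \ref{claim1}) is only that the surface cannot separate the two ends of a lift $\tilde T$ of the tube, hence some complementary component of $T \setminus \Sigma_t$ carries the core. Upgrading this to ``$\Sigma_t$ contains a loop essential in $T$'' already requires the continuity argument of Claim \ref{claim2}, and getting an embedded core-parallel annulus requires the Rubinstein--Scharlemann graphic and thin-position arguments of Sections \ref{annulus} and \ref{isotopy}. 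Finally, the passage from ``each surface either misses or contains each $\gamma_i$'' to a simultaneous disjoint arrangement --- which you correctly identify as the heart of the unlinking statement --- is left entirely unproved. So the proposal is a plausible outline of a strategy, but each of its three ingredients is missing an essential justification.
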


In an unpublished paper \cite{Souto}, Souto proved that short geodesics in hyperbolic compression bodies are unlinked with respect to the boundary of the compression body.

\begin{thm}[Souto \cite{Souto}]\label{comp}
If $\bar{N}$ is a compression body then there is a constant $\epsilon > 0$ which depends only on $\chi(\bar{N})$ such that for every complete hyperbolic metric on the interior $N$ of $\bar{N}$ we have: every finite collection of simple geodesics which are shorter than $\epsilon$ is unlinked with respect to $\bd\bar{N}$.
\end{thm}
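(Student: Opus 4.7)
The plan is to adapt the sweepout strategy from Otal's proof of Theorem \ref{Otal} to the compression-body setting. First I would fix a spine $K$ of $\bar{N}$---a graph so that $\bar{N}\setminus K$ deformation-retracts onto $\bd_-\bar{N}$, or a $1$-complex spine if $\bd_-\bar{N}=\emptyset$---and produce a smooth sweepout $\{\Sigma_t\}_{t \in (0,1)}$ of $N\setminus K$ by embedded surfaces isotopic to $\bd_+\bar{N}$, collapsing to $K$ as $t\to 0$ and exiting the remaining ends of $N$ as $t\to 1$. Replacing each $\Sigma_t$ by a simplicial-hyperbolic or pleated representative would bound its area by a constant $A_0$ depending only on $\chi(\bar N)$. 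This universal area bound is the input one plays against the growing radius of the Margulis tubes around very short geodesics.

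Next, given the collection $\Gamma = \{\gamma_1, \ldots , \gamma_n\}$ of short geodesics and their Margulis tubes $T_i$, the key geometric lemma to establish is that if $\epsilon$ is chosen small enough (depending only on $A_0$, hence only on $\chi(\bar N)$), then every essential component of $\Sigma_t \cap T_i$ is forced to be a meridian disk of $T_i$, since any other essential annulus or disk in the tube would wind enough times around $\gamma_i$ to exceed the area budget $A_0$. I would then compress each $\Sigma_t$ along these meridian disks and discard the resulting inessential closed components inside $T_i$, producing a new family $\Sigma'_t$ isotopic to $\bd_+\bar{N}$ that is disjoint from each $\gamma_i$ except at finitely many parameters where a copy of $\gamma_i$ is trapped as an isotopy class of curves on $\Sigma'_t$. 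At such a crossing parameter $t_i$ I would extract a surface $S_i$ isotopic to $\bd_+\bar{N}$ containing $\gamma_i$.

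To finish I would assemble the $S_i$ into a disjoint family: after perturbing to make the $t_i$ distinct, standard innermost-disk cut-and-paste between parallel copies of $\bd_+\bar{N}$ in the compression body (using the product structure on the complement of a spine) allows me to adjust the $S_i$ so that they are pairwise disjoint with $\gamma_i\subset S_i$, giving the required unlinking. The hard part will be the quantitative area-versus-tube-radius estimate at the heart of the argument: one must show that for $\ell(\gamma_i)$ sufficiently small no simplicial-hyperbolic surface of area at most $A_0$ can meet $\bd T_i$ in an essential non-meridional curve, and this estimate must hold uniformly across all tubes simultaneously. Pinning down the constant $\epsilon$ so that it depends only on $\chi(\bar N)$, and arranging the final cut-and-paste so that it respects the geometric intersections with the tubes, is where the main technical effort lies.
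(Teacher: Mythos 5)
First, note that the paper does not actually prove Theorem \ref{comp}: it is quoted from Souto's unpublished work, and only the analogous estimate for strongly irreducible Heegaard surfaces (Lemma \ref{lem1}) is proved here. Your overall architecture (bounded-area sweepout by simplicial hyperbolic/pleated surfaces played against the radius of the Margulis tubes) is the right starting point and matches the strategy of Lemma \ref{lem1}. But your key geometric lemma is stated exactly backwards, and the rest of the argument is built on it. In a Margulis tube $T$ of radius $r$ about a very short core, the \emph{meridian} disks are the expensive surfaces: by the coarea formula applied to the distance-to-core function, any properly embedded disk in $T$ whose boundary is a meridian of $\bd T$ must, at each level $s$, contain curves carrying the meridian class of the level torus, whose length is at least $2\pi\sinh s$; integrating gives area at least $2\pi(\cosh r - 1)$, which blows up as the core shortens. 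By contrast, a core-parallel (longitudinal) annulus can have area as small as roughly $\ell(\gamma)\sinh r$, which tends to $0$ under Meyerhoff's tube-radius estimate. So a surface of area at most $A_0$ cannot meet a sufficiently fat tube in a meridian disk, while essential longitudinal annuli cost essentially nothing. This is precisely the content of the projection argument in the proof of Claim \ref{claim1}: the bounded-area surface cannot separate the two ends of the lifted tube $\T$, and the correct conclusion is that some sweepout surface contains a simple loop that is essential and \emph{non-meridional} in $T$, i.e.\ isotopic in $T$ to a power of the core.

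Consequently your plan to ``compress each $\Sigma_t$ along these meridian disks'' and obtain a family disjoint from the $\gamma_i$ cannot be carried out: there are no such meridian disks to compress along, and even the goal is inverted --- to unlink the geodesics you need to trap each $\gamma_i$ \emph{on} a sweepout surface, not push the surfaces off of it. The correct continuation (as in Sections \ref{annulus} and \ref{isotopy}) is to upgrade the essential non-meridional loop in $S\cap T$ to an embedded annulus from $\bd N(\gamma)$ to the surface, and then run a thin-position argument (Schultens) to isotope $\gamma$ itself, rather than a power of it, into the surface; this last step is nontrivial and is entirely absent from your sketch. The final cut-and-paste to make the surfaces $S_i$ disjoint is also glossed over, but the fatal gap is the inverted area dichotomy at the heart of the argument.
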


In the same paper Souto sketched a proof that short geodesics in hyperbolic 3-manifolds are unlinked with respect to a strongly irreducible Heegaard surface:

\begin{thm}[Souto \cite{Souto}]\label{shorts}
For every $g$ there is a constant $\epsilon > 0$ such that the following holds: if $M$ is a closed hyperbolic 3-manifold, $S$ is a strongly irreducible Heegaard surface of genus $g$ in $M$, and $\Gamma$ is the collection of simple closed geodesics in $M$ which are shorter than $\epsilon$, then $\Gamma$ is unlinked with respect to $S$.
\end{thm}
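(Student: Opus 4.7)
The plan is to combine minimal surface theory with Souto's Theorem~\ref{comp}: first isotope $S$ off the short geodesics, then treat each side of $S$ as a compression body and apply Theorem~\ref{comp}. Using strong irreducibility, I would apply the Pitts--Rubinstein theorem to isotope $S$ to a minimal surface $S'$ in $M$. Since minimal surfaces in a hyperbolic $3$-manifold have Gauss curvature at most $-1$, Gauss--Bonnet gives $\mathrm{area}(S') \le 4\pi(g-1)$. Choose $\epsilon$ smaller than the $3$-dimensional Margulis constant so that each $\gamma \in \Gamma$ is the core of a Margulis tube $T_\gamma$, these tubes are pairwise disjoint, and the radius of $T_\gamma$ tends to infinity as the length of $\gamma$ tends to zero.

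The first key step is to show that, for $\epsilon = \epsilon(g)$ sufficiently small, the surface $S'$ can be isotoped to be disjoint from every $T_\gamma$. A hyperbolic monotonicity-type argument works: any component of $S' \cap T_\gamma$ that is essential on $\bd T_\gamma$ contributes area that grows without bound with the tube radius, eventually forcing $\mathrm{area}(S') > 4\pi(g-1)$. Hence every component of $S' \cap T_\gamma$ is a disk bounding a disk on $\bd T_\gamma$, and can be pushed out of $T_\gamma$. After performing these isotopies, $S'$ is disjoint from $\Gamma$, the complement $M \setminus S'$ is a disjoint union of two open handlebodies $H_1, H_2$, and the collection $\Gamma$ partitions as $\Gamma = \Gamma_1 \sqcup \Gamma_2$ according to which handlebody contains each curve.

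To finish, I would apply Theorem~\ref{comp} separately inside each $H_j$. For each $j$, produce a complete hyperbolic metric on $H_j$ (for example a convex-cocompact one compatible with the inclusion into $M$) in which each curve in $\Gamma_j$ remains homotopic to a geodesic of length less than the Souto constant for compression bodies of Euler characteristic $1-g$. Theorem~\ref{comp} then gives, inside $H_j$, disjoint embedded surfaces $S_i$ isotopic to $\bd \bar{H}_j = S'$ with $\gamma_i \subset S_i$ for each $\gamma_i \in \Gamma_j$; concatenating the families from $j=1$ and $j=2$ yields the required unlinking in $M$. I expect the main obstacle to be precisely this last step: producing, uniformly in $g$, a complete hyperbolic structure on each handlebody $H_j$ in which the ambient short geodesics are visibly short. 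The area-versus-tube-radius estimate is a fairly standard minimal-surface application, but transferring geometric control from $M$ to the $H_j$'s will require a careful drilling or deformation argument.
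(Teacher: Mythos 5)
There are two genuine gaps here, and together they undercut the whole strategy. First, the claim that $S'$ can be isotoped off every Margulis tube does not follow from the area bound. The monotonicity/projection estimate only penalizes components of $S' \cap T_\gamma$ that cross the meridian disk essentially (meridian disks, or more generally components separating the two ends of the lifted tube); it says nothing about a boundary-parallel annulus with longitudinal boundary, which is essential on $\bd T_\gamma$ but has bounded area. Conversely, a component all of whose boundary curves are inessential on $\bd T_\gamma$ need not be a disk: it could a priori be, say, a once-punctured torus of small area engulfing the core $\gamma$ (the frontier of a neighborhood of $\gamma$ union an arc to $\bd T_\gamma$), which cannot be pushed out of the tube. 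What the area bound actually yields is Claim \ref{claim1} of the paper: the bounded-area surface fails to separate the two ends of the lifted tube, so one of the two complementary handlebodies meets $T$ in a loop essential in $T$; running this through the sweepout produces the \emph{opposite} of your conclusion, namely that some sweepout surface must be isotoped so as to meet $T$ in an essential loop (Lemma \ref{lem1}).

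Second, and more seriously, the appeal to Theorem \ref{comp} is circular. That theorem concerns curves that are short geodesics in some complete hyperbolic metric on the interior of the compression body; its contrapositive says that a collection in $H_j$ that is not unlinked with respect to $\bd H_j$ cannot be made uniformly short in any such metric. So before you can apply it you must produce a complete hyperbolic structure on $H_j$ in which the curves of $\Gamma_j$ are short geodesics, and such a structure can exist only if those curves are already unlinked --- exactly what is to be proved. Transferring the incomplete geometry that $H_j$ inherits from $M$ to a complete structure on $H_j$ is the entire analytic content of Souto's (non-elementary, non-effective) argument, and you have flagged it but not supplied it. The paper avoids both issues by going the other way: it forces an essential intersection of a sweepout surface with $T$ (Lemma \ref{lem1}), upgrades this to an embedded annulus from $\bd N(\gamma)$ to the Heegaard surface using the Rubinstein--Scharlemann graphic (Lemma \ref{annulus}), and then isotopes $\gamma$ into $S$ by Schultens' thin-position argument (Lemma \ref{schultens}), never invoking a hyperbolic structure on the handlebodies.
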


The constants from Theorem \ref{Otal} and Theorem \ref{comp} are known to be computable.  However Souto only sketches a proof of Theorem \ref{shorts} and does not produce an explicit constant.  In this paper, we develop a new approach to Theorem \ref{shorts}.  Our proof is more topological than geometric and is more elementary than Souto's proof.  Moreover, we prove existence of a \textit{computable} constant $\epsilon$ (depending only on the genus $g$) such that primitive geodesics of length less than $\epsilon$ are unknotted with respect to a strongly irreducible Heegaard surface of genus $g$:

\begin{thm}\label{short}
For each $g \ge 2$ there exists a computable constant $\epsilon := \epsilon(g) > 0$ such that if $S$ is a strongly irreducible Heegaard surface of genus $g$ in a complete hyperbolic 3-manifold $M$ and $\gamma$ is a simple geodesic of length less than $\epsilon$ in $M$, then $\gamma$
is isotopic into $S$.
\end{thm}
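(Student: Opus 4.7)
The plan is to convert the length hypothesis on $\gamma$ into a geometric hypothesis on the radius of its Margulis tube, and then analyze that tube purely topologically. Using the standard explicit Margulis lemma, there is a computable function $R(\ell)$ with $R(\ell) \to \infty$ as $\ell \to 0$ such that any simple geodesic of length $\ell$ admits an embedded tubular neighborhood $T$ (a genuine solid torus) of radius at least $R(\ell)$. So it suffices to produce a computable $R_0(g)$ such that whenever $\gamma$ admits an embedded tube $T$ of radius $\geq R_0(g)$ and $S$ is a strongly irreducible Heegaard surface of genus $g$, some component of $S \cap T$ is an annulus isotopic to the core of $T$; once we have such a longitudinal annulus $A \subset S$, the core of $A$ is isotopic in $T$ to $\gamma$, and so $\gamma$ is isotopic into $S$.

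To produce such a component I would isotope $S$ so that $|S \cap \bd T|$ is minimal subject to $S$ being transverse to $\bd T$. Trivial components of $S \cap \bd T$ bound disks in the torus $\bd T$; an innermost such disk $D$, together with strong irreducibility of $S$, lets one surger $S$ and reduce $|S \cap \bd T|$, contradicting minimality. Hence after this cleanup all components of $S \cap \bd T$ are essential on $\bd T$, and since disjoint essential simple closed curves on a torus are pairwise parallel, they all share a common slope $s$. If $s$ is the longitude of $T$ then $S \cap T$ already contains the required annulus and we are done. Otherwise $s$ is either the meridian or a $(p,q)$-slope with $p \neq 0$, and the components of $S \cap T$ are meridian disks or oblique annuli; our task is to rule out this alternative when $R_0(g)$ is large enough.

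The main obstacle is precisely this elimination of non-longitudinal slopes, and it is where the computable bound on $R_0(g)$ has to be forced. The strategy I would use is a double counting: strong irreducibility together with $\chi(S) = 2 - 2g$ bounds the total number of components of $S \cap T$ by a function of $g$ only, while a very fat Margulis tube forces each non-longitudinal annulus or meridian disk to be geometrically large, and in particular to produce a compressing disk for $S$ on one side of the Heegaard splitting. For $R_0(g)$ large enough, one can simultaneously produce such a compressing disk on the opposite side that is disjoint from the first, violating strong irreducibility. Keeping every constant explicit in this chain --- the Margulis radius $R(\ell)$, the Euler-characteristic counts, and the geometric estimates inside $T$ --- is what yields the computability of the final $\epsilon(g)$.
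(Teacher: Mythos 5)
There is a genuine gap, and it sits exactly at the point you flag as ``the main obstacle.'' After you isotope $S$ to minimize $|S\cap\bd T|$, you have no geometric control over $S$ at all: a surface in a fixed isotopy class can have arbitrarily large area, so the assertion that a fat tube ``forces each non-longitudinal annulus or meridian disk to be geometrically large'' produces no contradiction --- $S$ could perfectly well contain an enormous meridian disk of $T$. The only way known to convert the Margulis radius into a constraint on $S$ is to work with a distinguished representative of the isotopy class whose area is bounded in terms of $g$; this is what the paper does, invoking the Pitts--Rubinstein bounded-area sweepout ($\operatorname{area}(S_t)\le 4\pi(g-1)+\delta$) so that no sweepout surface can cover a meridian disk of a sufficiently fat tube. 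Even then, that geometric input only yields the weak conclusion that some sweepout surface meets $T$ in a loop essential in $T$; the rest of the argument (Rubinstein--Scharlemann graphic plus Scharlemann's no-nesting lemma to get an annulus from $\bd N$ to $S$, and Schultens's thin-position lemma to upgrade the annulus to an isotopy of $\gamma$ into $S$) is entirely topological. Your proposal needs to prove the much stronger statement that a component of $S\cap T$ is a longitudinal annulus, and supplies no mechanism for it.

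There are also several smaller but real problems in the preparatory steps. The torus $\bd T$ is compressible in $M$ (it bounds a solid torus), so the standard innermost-disk cleanup for intersections with an \emph{incompressible} surface does not apply; removing trivial curves of $S\cap\bd T$ requires the no-nesting machinery for strongly irreducible splittings and does not simply follow from minimality of $|S\cap\bd T|$. Even granting that all curves of $S\cap\bd T$ are essential of a common slope, the components of $S\cap T$ need not be disks or annuli --- a properly embedded surface in a solid torus with boundary of a fixed slope can have any genus --- so the dichotomy ``meridian disks or oblique annuli'' is unjustified. Finally, $\chi(S)=2-2g$ bounds only the number of components of negative Euler characteristic; it places no bound on the number of disk or annulus components of $S\cap T$, so the proposed double count does not close. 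To repair the argument you would essentially have to reintroduce the three tools the paper uses: bounded-area sweepouts for the geometric step, the graphic/no-nesting argument to find the annulus, and the thin-position lemma to finish.
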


See the remark after the proof of Lemma \ref{lem1} for a description of the constant $\epsilon(g)$.\\

The proof of Theorem \ref{short} uses three main tools: bounded area sweepouts provided by work of Pitts and Rubinstein \cite{PittsRub}, an argument using the Rubinstein-Scharlemann graphic similar to an argument of Johnson \cite{johnson} used to prove that spines of strongly irreducible Heegaard splittings are locally unknotted, and a lemma of Schultens from \cite{Schultens}. In section \ref{annulus}, we use bounded area sweepouts and the Rubinstein-Scharlemann graphic to prove existence of an embedded annulus connecting a Heegaard surface to the boundary of a Margulis tube around a short geodesic. First, using a bounded area sweepout and the fact that Margulis tubes around very short geodesics are very fat, we show that the intersection of one of the sweepout surfaces with the Margulis tube around a short geodesic contains a simple loop which is homotopic to a power of the short geodesic.  This is the content of Lemma \ref{lem1} and is the only step which uses a geometric argument.  Next, we use the Rubinstein-Scharlemann graphic and Lemma \ref{lem1} to show that there exists an embedded annulus connecting a Heegaard surface to the boundary of a Margulis tube around the short geodesic.  This is the content of Lemma \ref{annulus}.
In section ref{isotopy}, a thin position argument of Schultens is used to show that the existence of the annulus provided by Lemma \ref{annulus} implies that the short geodesic is isotopic into a Heegaard surface.\\

\noindent\textit{Definitions.}  Let $M$ be a closed connected orientable 3-manifold.  Let $S$ be a closed connected orientable surface embedded in $M$ which bounds  handlebodies $H_1$ and $H_2$ on either side.  We call $(S,H_1 ,H_2)$ a \textit{Heegaard splitting} of $M$.  A Heegaard splitting is \textit{weakly reducible} if there are properly embedded essential disks in $H_1$, $H_2$ whose boundaries are disjoint.  A Heegaard splitting is \textit{strongly irreducible} if it is not weakly reducible.

\section{Finding an annulus}\label{annulus}

Let $S$ be a strongly irreducible Heegaard surface of genus $g \ge 2$ in a complete hyperbolic 3-manifold $M$.
The goal of this section is to prove the following Lemma.

\begin{lemma}\label{annulus}
There exists a computable constant $\epsilon := \epsilon(g)$ such that if $\gamma$ is a simple closed geodesic of length less than $\epsilon$ in $M$, then there is a regular neighborhood $N$ of $\gamma$ and an embedded annulus in $M \setminus N$ with boundary $\alpha \cup \alpha '$, where $\alpha$ is a simple essential non-meridinal loop in the boundary of $N$, and $\alpha '$ is contained in a surface isotopic to $S$.
\end{lemma}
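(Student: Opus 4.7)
My plan is to combine Lemma~\ref{lem1} with a Rubinstein-Scharlemann graphic argument modeled on Johnson's proof that spines of strongly irreducible Heegaard splittings are locally unknotted. First, fix the Pitts-Rubinstein sweepout $\{S_t\}_{t\in[0,1]}$ of $M$ by surfaces isotopic to $S$ whose areas are uniformly bounded in terms of $g$, and choose $\epsilon(g)$ small enough that the Margulis tube $T$ around any geodesic of length less than $\epsilon(g)$ has tube radius much larger than this area bound. Lemma~\ref{lem1} then supplies a parameter $t_0\in[0,1]$ and a simple closed curve $\beta\subset S_{t_0}\cap T$ that is homotopic in $T$ to a nonzero power of $\gamma$. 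This provides all the geometric input; the remainder of the argument is topological and hinges on the strong irreducibility of $S$.

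To extract the annulus I would form a two-parameter family by pairing $\{S_t\}$ with the foliation $\{T_r\}_{r\in(0,r_0]}$ of the interior of $T$ by concentric tori, where $T_{r_0}=\partial T$, and examine the associated Rubinstein-Scharlemann graphic in the square $[0,1]\times(0,r_0]$. In each open region of the graphic, the components of $S_t\cap T_r$ have a definite topological type on $T_r$ (inessential, meridional, or non-meridional essential), and these types evolve across edges in the standard controlled fashion. The curve $\beta$ guarantees that a region of non-meridional essential type appears at some level. Following Johnson, strong irreducibility of $S$ rules out configurations in which $S_t\cap T_r$ bounds compressing disks for $T_r$ on both sides of $S_t$ simultaneously, and combining this with the non-meridional input from $\beta$ forces a parameter $(t_1,r_1)$ and a non-meridional essential simple closed curve $\alpha\subset S_{t_1}\cap T_{r_1}$ co-bounding an embedded annulus in the exterior of $T_{r_1}$ with a curve $\alpha'$ on $S_{t_1}$. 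Taking $N$ to be $T_{r_1}$ slightly shrunk yields the conclusion.

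The hardest step is the extraction of an \emph{embedded} annulus with the correct boundary slope from the graphic. One must propagate the non-meridional topological type of $\beta$ across the graphic, rule out saddle transitions that could replace a non-meridional essential intersection curve by a meridional or inessential one, and invoke strong irreducibility at the right moment to upgrade a combinatorial configuration into a genuine embedded annulus rather than only an immersed surface. I expect most of the dependence of $\epsilon(g)$ on $g$ to enter here and through the effective bound in Lemma~\ref{lem1}.
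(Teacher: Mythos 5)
Your proposal assembles the right ingredients (Lemma \ref{lem1}, the Rubinstein--Scharlemann graphic, strong irreducibility), but it has a genuine gap at exactly the point you flag as ``the hardest step'': the annulus is never actually constructed. You assert that the graphic argument ``forces a parameter $(t_1,r_1)$ and a non-meridional essential simple closed curve $\alpha\subset S_{t_1}\cap T_{r_1}$ co-bounding an embedded annulus in the exterior of $T_{r_1}$,'' but nothing in the graphic machinery produces an embedded annulus directly, and the fallback you sketch --- propagating the non-meridional type of $\beta$ across regions of the graphic and ruling out saddle transitions that destroy it --- is not a workable plan: a single saddle can merge an essential non-meridional curve with another intersection curve and leave only curves that are inessential in the torus, so the type does not propagate. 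You are also missing the mechanism by which regions of the graphic get labeled at all: the paper first proves (via Scharlemann's No Nesting Lemma) that if no $S_t\cap\bd T$ contains an essential non-meridional curve of $\bd T$, then any curve of $S_t\cap\bd T$ that is essential in $S_t$ bounds an essential disk in one of the two handlebodies of the splitting; it is these compressing disks for the \emph{Heegaard} handlebodies (not ``compressing disks for $T_r$,'' as you write) that give the labels $1$ and $2$, and strong irreducibility forbids a region, or two regions met by the same vertical line, from carrying both labels.

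The paper's actual route, which you would need to supply, is a dichotomy followed by a product-region argument. The graphic argument shows: either some $S_t\cap\bd T$ already contains an essential non-meridional curve of $\bd T$, or there is a $\sigma$ such that $S_\sigma\cap T$ contains no curve essential in $S_\sigma$. In the second case one does \emph{not} look inside the graphic for the annulus. Instead, one takes the sweepout to pass through the isotoped surface $S=S_0$ containing the curve $\omega\subset S\cap T$ essential in $T$ (note this forces a new sweepout through that particular position of $S$, not the bounded-area Pitts--Rubinstein sweepout, whose role ends with Lemma \ref{lem1}); the product region between $S_0$ and $S_\sigma$ yields an embedded annulus $A$ from $\omega$ to a curve $l\subset S_\sigma$. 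Since every curve of $S_\sigma\cap\bd T$ bounds a disk in $S_\sigma$, one can push $l$ off $\bd T$, and since $l$ is essential in $S_\sigma$ it must land outside $T$; then $A\cap\bd T$ contains a curve essential in $A$, hence essential and non-meridional in $\bd T$, and a sub-annulus of $A$ isotopes the Heegaard surface to meet $\bd T$ in such a curve. The final annulus from $\bd N$ to $S$ is then the obvious vertical annulus in $T\setminus N$. Without this product-region step (or a genuine substitute), your argument does not close.
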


\noindent\textit{Sweepouts and Pitts-Rubinstein.} A sweepout of of a 3-manifold $M$ with a Heegaard surface $S$ is a smooth degree one map $f: S \times [0,1] \rightarrow M$ such that  $S_t := f(S \times \{ t \}  )$ is a surface isotopic to $S$ for each $t\in (0,1)$ and $f(S \times \{ 0 \} )$, $f(S \times \{ 1 \} )$ are spines of the handlebodies bounded by $S_{1/2}$.  By work of Pitts and Rubinstein \cite{PittsRub}, there is a constant $A(g)$ such that if $M$ has a genus $g$ strongly irreducible Heegaard surface $S$ then there exists a sweepout $f: S \times [0,1] \rightarrow M$ of $M$ such that $area(S \times \{ t \} ) \le A(g)$ for each $t \in [0,1]$.  We can use $A(g) = 4\pi(g-1) + \delta$ for any $\delta > 0$. \\

We will use bounded area sweepouts to prove the following Lemma.

\begin{lemma}\label{lem1}
There exists a computable constant $\epsilon := \epsilon(g) > 0$ such that the following holds: if $\gamma$ is a simple closed geodesic of length less than $\epsilon$ in $M$ and $T$ is a Margulis tube about $\gamma$, then $S$ may be isotoped in $M$ so that $S \cap T$ contains a simple loop which is essential in $T$.
\end{lemma}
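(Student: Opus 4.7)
The plan is to combine the Pitts--Rubinstein bounded area sweepout with the fact that the Margulis tube $T$ around $\gamma$ has radius $R \to \infty$ as $\ell(\gamma) \to 0$. Fix a sweepout $f : S \times [0,1] \to M$ with $\mathrm{area}(S_t) \le A(g)$ for every $t$, and choose $\epsilon(g)$ small enough that $\ell(\gamma) < \epsilon(g)$ forces $R$ to satisfy $2\pi(\cosh R - 1) > A(g)$.

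Suppose for contradiction that no $S_t$ has $S_t \cap T$ containing a simple loop essential in $T$. Since $\pi_1(T) \cong \mathbb{Z}$ is abelian and every primitive class in $H_1$ of an orientable surface admits an embedded simple representative, this assumption is equivalent to saying that for every $t$ and every component $C$ of $S_t \cap T$ the inclusion-induced map $\pi_1(C) \to \pi_1(T)$ is trivial. Hence every such $C$ lifts injectively to the universal cover $\widetilde T \subset \mathbb{H}^3$, the $R$-neighborhood of a geodesic line $\widetilde\gamma$. Because $f$ has degree one, transversality produces some $t_0$ and $p \in S_{t_0} \cap \gamma$; then the component $C \ni p$ of $S_{t_0} \cap T$ lifts to $\widetilde C \subset \widetilde T$ with $\widetilde p \in \widetilde C \cap \widetilde\gamma$ and $\partial \widetilde C \subset \partial \widetilde T$, so the boundary of $\widetilde C$ lies at extrinsic distance $R$ from $\widetilde p$.

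Now apply the coarea formula to the $1$-Lipschitz function $d_{\widetilde\gamma} : \widetilde C \to [0, R]$. For each regular value $r \in (0, R)$, pick a component $\alpha_r$ of the level set separating $\widetilde p$ from $\partial \widetilde C$ in $\widetilde C$; as a simple closed curve on the cylinder $\partial N_r(\widetilde\gamma)$, $\alpha_r$ is either trivial on that cylinder or wraps it once. I would show $\alpha_r$ must wrap: otherwise, capping it with its bounding disk on the cylinder produces a 2-sphere in the solid cylinder $\overline{N_r(\widetilde\gamma)} \cong \mathbb{R}^3$ whose enclosed ball must contain a second intersection $q$ of $\widetilde\gamma$ with $\widetilde C$, since the proper geodesic line $\widetilde\gamma$ cannot be trapped in a bounded region. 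Disjointness of distinct lifts of $C$ then prevents $q$ from being a deck-translate of $\widetilde p$, so $q$ descends to a new point of $C \cap \gamma$; iterating on progressively innermost trivial separating curves must eventually produce a wrapping $\alpha_r$, since $C \cap \gamma$ is finite. Once $\alpha_r$ wraps, $\mathrm{length}(\alpha_r) \ge 2\pi \sinh r$, and coarea yields
\[
A(g) \ \ge\ \mathrm{area}(\widetilde C) \ \ge\ \int_0^R 2\pi \sinh r \, dr \ =\ 2\pi(\cosh R - 1),
\]
contradicting the choice of $\epsilon(g)$.

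The main obstacle is the innermost-disk step forcing the separating level-set components to wrap; this is the only place where the topological lifting hypothesis must be combined with the geometric coarea bound, and it is precisely where the short-geodesic hypothesis is used. The constant $\epsilon(g)$ produced is explicit: it is the largest $\ell$ for which the Margulis tube of a geodesic of that length has $2\pi(\cosh R - 1) \le 4\pi(g-1) + \delta$, which can be computed from the Margulis constant in dimension three.
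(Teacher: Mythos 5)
There is a genuine gap, and it occurs at exactly the step you flag as the main obstacle: the claim that the distance level curves $\alpha_r$ on $\widetilde C$ must eventually wrap around $\widetilde\gamma$. This is false, and no iteration on innermost curves can repair it. Consider a component $C$ of $S_{t_0}\cap T$ that is a long, thin, properly embedded disk (a ``tongue'') of width $w$: it runs radially from $\partial T$ down to the core, crosses $\gamma$ transversally at $p$, makes a U-turn crossing $\gamma$ once more at a second point $q$, and returns to $\partial T$. Its area is roughly $2Rw$, which can be made far smaller than the meridian disk area, and it satisfies your standing hypothesis (a disk carries no loop essential in $T$). In the universal cover, every level curve of $d_{\widetilde\gamma}$ on $\widetilde C$ is inessential on the cylinder $\partial N_r(\widetilde\gamma)$; when you cap one off, the resulting sphere meets $\widetilde\gamma$ precisely at $\tilde p$ and $\tilde q$, so the proper line enters at one and exits at the other, producing no new intersection point and no wrapping curve. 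Your iteration therefore terminates consistently after finding $q$, and the contradiction $A(g)\ge 2\pi(\cosh R-1)$ is never reached. (There is also a smaller issue: a \emph{single} component of $d^{-1}(r)$ separating $\tilde p$ from $\partial\widetilde C$ need not exist, only the whole level set separates.)

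The deeper problem is strategic: no contradiction can be extracted from one sweepout surface that happens to meet $\gamma$, since such a surface can have small area and only inessential intersection with $T$, as above. The paper uses the area bound differently and only once: projecting a lift $\widetilde S_0$ of a component of $S_t\cap T$ onto a totally geodesic meridian disk $D$ along equidistants, the bound $\mathrm{area}(S_t)<\mathrm{area}(D)$ yields a point $x\in D$ missed by the projection, so the line $p^{-1}(x)$ shows $\widetilde S_0$ does \emph{not} separate the two ends of $\widetilde T$. Consequently, for every $t$ one of the two handlebodies $H_t$, $W_t$ contains in its interior a loop essential in $T$. The essential loop in $S_\beta\cap T$ is then produced by a continuity argument in $t$: near $t=0$ the loop lies in $W_t$, near $t=1$ in $H_t$, so at some $\beta$ both sides contain essential loops, and an immersed annulus in $T$ joining powers of these two loops must meet $S_\beta$ in a loop essential in $T$. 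To fix your proof you would need to replace the single-surface coarea argument with some version of this two-sided labeling and intermediate-value step.
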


\begin{proof}
Let $f: S \times [0,1] \rightarrow M$ be a sweepout of $M$ with $area(S \times \{ t \} ) \le A := A(g)$ for each $t \in [0,1]$.
Let $\epsilon = \epsilon(g) > 0$ be so small that the Margulis tube about any geodesic in a complete hyperbolic 3-manifold of length at most epsilon has totally geodesic meridian disks with area at least $3A$.  Note that the constant $\epsilon$ is computable.  Let $\gamma$ be a geodesic of length at most $\epsilon$.

The set $M \setminus S_{1/2}$ is a union of two handlebodies.  Let $H_{1/2}$ and $W_{1/2}$ be the closures of these handlebodies.  For $t \in (0,1)$, let $H_t$ be the closure of the component of $M \setminus S_t$ which changes into $H_{1/2}$ as $S_t$ isotopes to $S_{1/2}$ and let $W_t$ be the closure of the other component of $M \setminus S_t$.  For $t \in (0,1)$ near $0$, one of the handlebodies, say $H_t$, is a small neighborhood of a spine.  Since $f$ has degree one, the handlebody $W_t$ is a small neighborhood of a spine for $t \in (0,1)$ near $1$.

If $S_t \cap T$ contains a loop which is essential in $T$ for some $t\in [0,1]$, then we are done proving Lemma \ref{lem1}.  Assume that $S_t \cap T$ does not contain a loop which is essential in $T$ for any $t\in [0,1]$.\\

\begin{claim}\label{claim1}
For each $t\in[0,1]$ either the interior of  $W_t \cap T$ or the interior of $H_t \cap T$ contains a loop which is essential in $T$.\\
\end{claim}

\noindent\textit{Proof.}  
We are assuming that $S_t \cap T$ does not contain a loop which is essential in $T$.  If $S_t \cap T $ is empty then we are done with Claim \ref{claim1}, so we will assume that $S_t \cap T$ is nonempty.
Thus we have that $S_t \cap T$ is nonempty and that any loop in $S_t \cap T$ is trivial in $T$.  Let $\T$ be a lift of $T$ to the universal cover $\h^3$ of $M$.  Since any loop in $S_t \cap T$ is trivial in $T$, there is a lift $\tS$ of $S_t \cap T$ contained in $\T$ which is homeomorphic to $S_t \cap T$. Let $\tS _0$ be a connected component of $\tS$.  Since $\T$ is a ball, $\tS _0$ must separate $\T$.   We claim that $\T \setminus \tS_0$  contains a component which has compact closure which does not separate the ends of $\T$.

Let $D$ be a totally geodesic meridian disk in $\tilde{T}$ such that $D$ is orthogonal to $\bd\tilde{T}$.  Consider the projection $p : \tilde{T} \rightarrow D$ of $\tilde{T}$ to $D$ along lines equidistant from the geodesic core of $\tilde{T}$.  The area of $S_t$ is at least the area of $p(\tilde{S})$.  Since the area of $S_t$ is less than the area of $D$, the interior of $D$ contains a point $x$ which is not in $p(\tilde{S})$.  The preimage $p^{-1} (x)$ of $x$ is disjoint from $\tS _0$ and therefore contained in one component of $\T \setminus \tS _0$.  Thus $\tS_0$ does not separate the ends of $\T$
Also, $\tS _0$ is compact, hence contained in some compact subset $K$ of $\T$.  The complement $\T \setminus K$ of $K$ in $\T$ is contained in the component of $\T \setminus \tS _0$ which contains $p^{-1} (x)$.  Thus the other component of $\T \setminus \tS _0$ is contained in the compact set $K$ and therefore has compact closure.  
We have shown that every component of $\tS$ splits off a connected piece of $\T$ which does not separate the ends of $\T$ and which has compact closure. 

Let $D_1$ and $D_2$ be distinct meridian disks in $\T$ which project to the same disk in $T$.  
There is one component of $\T \setminus (D_1 \cup D_2)$ whose closure is compact.  Let $\mathcal{F}$ be the closure of this component.   Since $S_t$ is compact, there are finitely many lifts $\tS _1 , ... , \tS _k$ of $S_t \cap T$ which intersect the compact set $\mathcal{F}$.  For each $i = 1, ... ,k$, $\tS _i$ splits a piece from $\T$ which has compact closure so  each component of $\tS_1\cup \cdots \cup \tS_k$ splits off a piece of $\T$ which does not separate the ends of $\T$ and which has compact closure.  Thus the set $\T \setminus (\tS _1 \cup \cdots \cup \tS _k)$ contains a connected component which intersects both $D_1$ and $D_2$.  Therefore we can find an arc in $\T \setminus (\tS _1 \cup \cdots \cup \tS _k)$ with endpoints in $D_1$ and $D_2$. We have shown that the complete pre-image of $S_t \cap T$ does not separate the two ends of $\tilde{T}$ and thus some component $\tilde{C}$ of the pre-image of $T \setminus S_t$ in $\T$ is non-compact.  This non-compact component $\tilde{C} \subset \T$ projects to a set $C$ in $T \setminus S_t$.   Since $\tilde{C}$ is non-compact, the set $C$ has nontrivial image in $\pi_1(T)$ under the map induced by inclusion.  Thus there is a loop contained in $C$ (which is contained in either  the interior of $W_t \cap T$ or the interior of $H_t \cap T$) which is essential in $T$. \hfill$\Box$(Claim \ref{claim1}) \\

\noindent The following Claim will complete the proof of Lemma \ref{lem1}.\\

\begin{claim}\label{claim2}
The Heegaard surface $S_{\beta}$ contains a simple loop in $S_\beta \cap T$ which is essential in $T$ for some $\beta \in (0,1)$.
\end{claim}

\noindent\textit{Proof.}  Since $H_t$ is a small neighborhood of a spine for $t \in (0,1)$ near $0$, the interior of $W_t \cap T$ contains a loop which is essential in $T$ for $t \in (0,1)$ near $0$.  Fix $\delta > 0$ so that the interior of $W_{\delta} \cap T$ contains a loop which is essential in $T$.  If $H_{\delta} \cap T$ contains a loop which is essential in $T$, then let $\beta = \delta$.
If $H_{\delta} \cap T$ does not contain a loop which is essential in $T$, then let $\sigma = \inf \{t \in (\delta ,1) | H_t \cap T$ contains a loop which is essential in $T\}$. Note that $\sigma$ exists since $W_{t}$ bounds a very small neighborhood of a graph for $t$ near $1$.  

If the interior of $H_\sigma \cap T$ contains a loop which is essential in $T$, then $H_t \cap T$ contains a loop which is essential in $T$ for $t$ near $\sigma$, contradicting the definition of $\sigma$.
Thus the interior of $H_{\sigma} \cap T$ does not contain a loop which is essential in $T$ and therefore Claim \ref{claim1} implies that the interior of $W_{\sigma} \cap T$ contains a loop which is essential in $T$.  So $W_t \cap T$ contains a loop which is essential in $T$ for $t$ near $\sigma$. This implies that for some $\beta > \sigma$ near $\sigma$, both $H_\beta \cap T$ and $W_\beta \cap T$ contain loops $l_W \subset W$ and $l_H \subset H$  which are essential in $T$.
For some natural numbers $m,n$ we have $(l_W)^m$ is homotopic in $T$ to $(l_H)^n$.  Thus there is an immersed annulus $A$ in $T$ with boundary components equal to $(l_W)^m$ and $(l_H)^n$.  If $A \cap S_\beta$ does not contain any loops which are essential in $A$, then there is an arc in $A$ with endpoints in $l_W$ and $l_H$ which is disjoint from $S_\beta$.  This contradicts the fact that $l_W$ and $l_H$ are in different components of $T \setminus S_\beta$.  Thus some loop $\alpha$ in $A \cap S_\beta$ is essential in $A$.  Since the boundary components of $A$ are essential in $T$, the loop $\alpha$ must be essential in $T$.  Thus $S_\beta$ contains  a loop which is essential in $T$, so $S_\beta$ contains  a simple loop which is essential in $T$. \hfill $\Box$(Claim \ref{claim2})

\end{proof}

\noindent\textbf{Remark.}  The constant $\epsilon(g)$ in Lemma \ref{lem1} is the constant we will use in Theorem \ref{short}.  From the proof of Lemma \ref{lem1}, $\epsilon(g)$ should be so small that a meridian disk in a Margulis tube around a closed geodesic of length less than $\epsilon(g)$ has area greater than $A(g) = 4\pi(g-1)$.  The area of a totally geodesic disk of radius $r$ in hyperbolic space is $2\pi(\cosh(r) - 1)$.  Thus we want the radius of the Margulis tube to be greater than $\operatorname{arccosh}(2g-1)$.  Meyerhoff \cite{meyerhoff} proved that the radius $r$ of a Margulis tube around a closed geodesic of length $l$ less than $0.107$ satisfies $\sinh^2 (r) = 1/2(\frac{\sqrt{1-2k}}{k} - 1)$ where $k = \cosh(\sqrt{\frac{4\pi l}{\sqrt{3}}}) - 1$.   Thus choosing $\epsilon(g)$ so that $\operatorname{arcsinh(\sqrt{1/2(\frac{\sqrt{1-2k}}{k} - 1)}}) > \operatorname{arccosh}(2g-1)$, where $k = \cosh(\sqrt{\frac{4\pi \epsilon(g)}{\sqrt{3}}}) - 1$ will suffice.\\

By Lemma \ref{lem1}, we may isotope $S$ so that $S \cap T$ contains a simple loop $\omega$ which is essential in $T$.  We will show that we may isotope $S$ so that $S \cap \bd T$ contains a simple loop which is essential in $T$ by using an argument of Johnson used in \cite{johnson} to prove that spines of strongly irreducible Heegaard splittings are locally unknotted.  The idea is to use the Rubinstein-Scharlemann graphic to show that some sweepout surface $S'$ intersects $\bd T$ in loops which are inessential in $S'$.  Since $S$ and $S'$ cobound a thickened copy of $S$, there is an embedded annulus with one boundary component equal to $\omega$ and the other boundary component disjoint from $T$.  The intersection of this annulus with $\bd T$ must contain a simple loop which is essential in the annulus, providing us with an embedded annulus to isotope $\omega$ into $\bd T$.

We will now define a new sweepout of $M$.  Assume that we have isotoped $S$ so that $S \cap T$ contains a simple loop $\omega$ which is essential in $T$.  The Heegaard surface $S$ splits $M$ into two handlebodies $H_1$ and $H_2$.
Let $f : M \rightarrow [-1,1]$ be a smooth function such that $f^{-1} (-1)$ is a spine of $H_1$, $f^{-1} (1)$ is a spine of $H_2$, $f^{-1} (t)$ is a surface isotopic to $S$ for each $t\in(-1,1)$, and $f^{-1}(0) = S$ (i.e., the map $f$ provides a sweepout of $M$ by disjoint Heegaard surfaces, one of which is $S$).  For each $t\in(-1,1)$, let $S_t = f^{-1} (t)$.  Note that the surfaces in this sweepout do not necessarily have area bounded in terms of $g$.\\

\begin{lemma}\label{disk}
Let $Y$ be a solid torus in $M$.  One of the following holds:\\
(1) $S_t \cap \bd Y$ contains a loop which is essential in $\bd Y$ and does not bound a meridian disk in $Y$ for some $t\in (-1,1)$, or \\
(2) For all $t\in (-1,1)$, if $S_t \cap \bd Y$ contains a loop which is essential in $S_t$, then $S_t \cap \bd Y$ contains a loop which bounds a properly embedded, essential disk in one of the handlebodies bounded by $S_t$.
\end{lemma}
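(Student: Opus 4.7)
The plan is to argue by contrapositive: assume that (1) fails, so that for every $t\in(-1,1)$ every loop of $\Gamma_t:=S_t\cap\bd Y$ essential in $\bd Y$ is a meridian of $Y$, and deduce (2). Fix $t$ such that $\Gamma_t$ contains a loop essential in $S_t$; the task is to exhibit a loop of $\Gamma_t$ bounding a properly embedded essential disk in $H_1(t)$ or $H_2(t)$. Throughout I use that $M$ is irreducible (being hyperbolic), so every embedded $2$-sphere in $M$ bounds a ball.

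I split into two cases according to whether $\Gamma_t$ contains any essential-in-$S_t$ loop that is inessential on $\bd Y$. In the first case, pick such a loop $\ell_0$ and let $\ell^*\in\Gamma_t$ be innermost on $\bd Y$ among the essential-in-$S_t$ loops inside the disk $E\subset\bd Y$ bounded by $\ell_0$, bounding a subdisk $E^*\subset E$. By the innermost choice, every loop of $\Gamma_t\cap\operatorname{int}(E^*)$ is inessential in $S_t$, hence bounds a disk in $S_t$. Pushing $E^*$ slightly off $\bd Y$ into $Y$ yields a disk $D$ with $\bd D=\ell^*$ whose interior intersection circles with $S_t$ are small push-offs, one for each loop of $\Gamma_t\cap\operatorname{int}(E^*)$; each such push-off is isotopic in $S_t$ to the corresponding inside loop and therefore inessential in $S_t$. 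Innermost-circle arguments combined with the ball-bounding property of $M$ then let me isotope $D$ rel $\bd D$ across balls to eliminate these push-offs one at a time, until $\operatorname{int}(D)\cap S_t=\emptyset$. The resulting $D$ is properly embedded in some $H_i(t)$ with boundary $\ell^*\in\Gamma_t$ essential in $S_t$, giving the required disk for (2).

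In the complementary case, every essential-in-$S_t$ loop of $\Gamma_t$ is essential on $\bd Y$ and, by the failure of (1), a meridian of $Y$. Pick such a meridian $\mu\in\Gamma_t$ and let $D_\mu\subset Y$ be a meridian disk bounded by $\mu$ chosen to minimize $|D_\mu\cap S_t|$. If $\operatorname{int}(D_\mu)\cap S_t=\emptyset$, then $D_\mu$ lies in one handlebody $H_i(t)$, is properly embedded there, and has boundary $\mu\in\Gamma_t$ essential in $S_t$, giving (2). Otherwise, any innermost interior circle $c\subset D_\mu\cap S_t$ is, by minimality and irreducibility of $M$, essential in $S_t$ and bounds a compressing subdisk of $D_\mu$ in some $H_i(t)$, but with boundary $c$ in $\operatorname{int}(D_\mu)\subset\operatorname{int}(Y)$ rather than on $\bd Y$.

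The main obstacle is closing the gap in this second case: translating a compressing subdisk of $D_\mu$ whose boundary lies in $\operatorname{int}(Y)$ into a loop of $\Gamma_t$ bounding an essential disk. The plan is to analyze the annular regions of $D_\mu$ between consecutive intersection circles, which alternate between $H_1(t)$ and $H_2(t)$, and use the strong irreducibility of the Heegaard splitting $(S_t,H_1(t),H_2(t))$ to rule out pairs of disjoint compressing disks on opposite sides. This, combined with further innermost-disk arguments on $\bd Y$ and applications of the sphere-bounds-ball property, forces the minimum $|D_\mu\cap S_t|$ to equal $|\bd D_\mu|$—that is, $\operatorname{int}(D_\mu)\cap S_t=\emptyset$—and hence produces the required essential disk bounded by $\mu\in\Gamma_t$. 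Since $\Gamma_t$ has only finitely many components, this iteration terminates and (2) holds.
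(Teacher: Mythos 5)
Your proposal does not close the key step, and you say so yourself: in the second case you reduce to a meridian $\mu\in\Gamma_t$ bounding a meridian disk $D_\mu$, observe that an innermost essential circle of $\operatorname{int}(D_\mu)\cap S_t$ yields a compressing disk whose boundary lies in $\operatorname{int}(Y)$ rather than in $\Gamma_t$, and then only outline a ``plan'' to upgrade this to a compressing disk bounded by $\mu$ itself. That upgrade is exactly the hard content of the lemma. Knowing that \emph{some} essential circle of $S_t$ bounds a disk on one side is not enough; you need the original curve to bound one, and the passage from the former to the latter for a strongly irreducible splitting is a genuine theorem, not an exercise in innermost disks. The paper dispatches the entire lemma by quoting precisely this theorem: Scharlemann's No Nesting Lemma, which states that if $c$ is a curve in a strongly irreducible Heegaard surface that is essential in the surface and bounds an embedded disk in $M$, then $c$ bounds a properly embedded (essential) disk in one of the two handlebodies. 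Under the negation of (1), every loop of $S_t\cap\bd Y$ bounds an embedded disk in $M$ (a subdisk of $\bd Y$ if it is trivial there, a meridian disk of $Y$ otherwise), so No Nesting applies directly to any such loop that is essential in $S_t$, with no case division and no innermost-curve bookkeeping.

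Two further comments. First, your case split is unnecessary even as scaffolding: the only input needed is that each loop of $\Gamma_t$ bounds an embedded disk somewhere in $M$, which holds uniformly in both of your cases. Second, even your ``first case'' argument, while morally a standard surgery/irreducibility reduction, is sketchy at the point where you eliminate interior intersection circles: one does surgery on $D$ along a disk in $S_t$ bounded by an innermost circle (pushing off), rather than an isotopy ``across balls'' rel $\bd D$, since the ball bounded by the resulting sphere may contain other sheets of $D$. If you want a self-contained proof rather than a citation, the right move is to prove (or quote) the No Nesting Lemma itself; as written, the proposal has a gap exactly where the strong irreducibility hypothesis must be used.
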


\begin{proof}
Assume that $S_t \cap \bd Y$ does not contain a loop which is both essential  in $\bd Y$ and does not bound a meridian disk in $Y$ for all $t\in (-1,1)$.  Thus any loop in $S_t \cap \bd Y$ is either trivial in $\bd Y$ or a meridian for $Y$.  In particular, any loop in $S_t \cap \bd Y$ bounds a properly embedded disk in $M$.  Scharlemann's No Nesting Lemma (see \cite{Scharlemann}) implies that any loop which is essential in $S_t$ and bounds a disk in $M$ bounds a properly embedded, essential disk in one of the handlebodies bounded by $S_t$.  Thus if $S_t \cap \bd Y$ contains a loop which is essential in $S_t$, then $S_t \cap \bd Y$ contains a loop which bounds a properly embedded, essential disk in one of the handlebodies bounded by $S_t$.
\end{proof}

\noindent\textit{The Rubinstein-Scharlemann graphic.} Let $g : T \rightarrow [0,1]$ be a smooth function such that $g^{-1} (0) = \gamma$ and $g^{-1} (1) = \bd T$ and $g^{-1} (t)$ is a surface isotopic to $\bd T$ for each $t$ in $(0,1)$.  
For each $t \in [-1,1]$ let $g_t = g|_{S_t \cap T}$.  We say a function is \textit{near-Morse} if there is a single degenerate critical point or there are two non-degenerate critical points at the same level.   By work of Cerf \cite{cerf}, we can isotope $f$ and $g$ so that $g_t$ is a Morse function for all but finitely many $t$ and $g_t$ is near-Morse for the remaining values of $t$.   The Rubinstein-Scharlemann graphic, $G$, is the set of points $(t,s_t ) \in [-1,1] \times [0,1]$ such that $s_t$ is a critical value of the function $g_t$.   
Rubinstein and Scharlemann originally used the graphic to compare two sweepouts by Heegaard surfaces (see \cite{rub-scharl}).  We are using a sweepout of the 3-manifold be Heegaard surfaces and a sweepout of a solid torus by tori.  The properties of the graphic we need to use follow from the same arguments Rubinstein and Scharlemann use in their paper.
Rubinstein and Scharlemann showed that this set of points is a graph with vertices of valence 2 and 4 in the interior of $[-1,1] \times [0,1]$ and valence 1 and 2 in the edges.  A valence-2 vertex at $(t,s_t )$ occurs when the map $g_t$ has a degenerate critical point.  A valence-4 vertex at $(t,s_t )$ occurs when $g_t$ has critical points at the same level.  We will use the fact that if $(t_1 , s_1 )$ and $(t_2 ,s_2 )$ are in the same component of $[-1,1] \times [0,1] \setminus G$, then the surface $S_{t_1}$ is isotopic to $S_{t_2}$ via an isotopy which takes the loops in $g_{t_1}^{-1}(s_1)$ to the loops in $g_{t_2}^{-1}(s_2)$.     
See  \cite{rub-scharl} or \cite{johnson} for more on the Rubinstein-Scharlemann graphic.

\begin{lemma}
Either $S_t \cap \bd T$ contains a loop which is essential in $\bd T$ and does not bound a meridian disk in $T$ for some $t\in (-1,1)$ or there is a $\sigma \in(-1,1)$ such that $S_{\sigma} \cap T$ does not contain an essential loop of $S_{\sigma}$.
\end{lemma}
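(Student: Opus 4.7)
The plan is to apply the Rubinstein--Scharlemann graphic strategy of Johnson~\cite{johnson} to the two sweepouts $f$ and $g$.  I assume that conclusion~(1) fails and argue that conclusion~(2) must hold. For each complementary face $F$ of the graphic $G \subset [-1,1]\times[0,1]$, fix a representative $(t,s) \in F$ and consider the collection of loops $g_t^{-1}(s)$, which are simple closed curves on both $S_t$ and the torus $g^{-1}(s)$.  Label $F$ by $H$ if some loop of $g_t^{-1}(s)$ bounds a compressing disk of the handlebody $H_t$, and by $W$ if some loop bounds a compressing disk of $W_t$.  Crossing an edge of $G$ corresponds to an isotopy of the intersection pattern across a single critical point of $g_t$, which preserves the compressibility of each individual loop, so these labels are well-defined on $F$.

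Strong irreducibility prohibits any face from carrying both labels.  Two loops $\alpha, \beta \subset g_t^{-1}(s)$ bounding compressing disks on opposite sides of $S_t$ are either distinct (in which case they are disjoint on $S_t$, since they sit on the same embedded torus, and provide a weak reduction of the splitting) or equal (in which case the two disks glue to a reducing $2$-sphere); both alternatives contradict strong irreducibility for genus $g \ge 2$.

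Under the assumption that~(1) fails, I apply Lemma~\ref{disk} with $Y = T_s := g^{-1}([0,s])$.  The hypothesis of Lemma~\ref{disk} holds for $T_s$: any essential non-meridional loop of $S_t \cap \bd T_s$ on $\bd T_s$ could be pushed outward through the product shell $g^{-1}([s,1]) \cong T^2 \times I$ to produce such a loop on $\bd T$ lying in a surface isotopic to $S$, contradicting the failure of~(1).  Lemma~\ref{disk} then implies that at any unlabeled face, every loop of $g_t^{-1}(s)$ is inessential in $S_t$.  A sequence of standard innermost-disk isotopies in $T$ first eliminates $S_t \cap g^{-1}(s)$ and then simplifies $S_t \cap T$ further by pushing across $\bd T$, producing a surface $S_\sigma$ isotopic to $S$ whose intersection with $T$ contains no essential loop of $S_\sigma$, establishing~(2).

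The principal obstacle is to demonstrate that an unlabeled face must exist.  For this I would perform the standard boundary analysis of the rectangle along its four edges, in the style of Johnson's argument: near the vertical edges $t = \pm 1$ the surface $S_t$ collapses onto a spine of one of the handlebodies, constraining which side of $S_t$ can admit compressing disks whose boundary lies in $g_t^{-1}(s)$; near the horizontal edge $s = 0$ the torus $g^{-1}(s)$ degenerates to $\gamma$, so the labels along the bottom strip are controlled by the combinatorics of how the two spines meet $\gamma$.  A path-connectedness argument across the graphic, combined with the prohibition of faces carrying both labels, forces an interior face to be unlabeled, completing the proof.
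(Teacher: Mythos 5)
Your overall strategy (label the faces of the Rubinstein--Scharlemann graphic by which side of $S_t$ is compressed by a loop of $g_t^{-1}(s)$, rule out doubly-labeled faces by strong irreducibility) is the same as the paper's, but the two steps you defer are exactly where the content of the lemma lives, and as sketched they do not go through. First, the existence of an unlabeled face is not what you need, and your "path-connectedness argument" does not locate the right one. An unlabeled face at an interior level $s<1$ only gives information about $S_t\cap g^{-1}(s)$, i.e.\ about $S_t\cap \bd T_s$ where $T_s=g^{-1}([0,s])$; it says nothing about the part of $S_t$ lying in $T\setminus T_s$, so it cannot by itself yield conclusion (2), which concerns $S_\sigma\cap T$ for the actual sweepout surface $S_\sigma$ (not some further isotoped surface obtained by "innermost-disk isotopies"). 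What the argument actually requires is a face meeting the top edge $s=1$ that is unlabeled. The paper gets this by contradiction: if $S_t\cap T$ contained an essential loop of $S_t$ for every $t$, then (after the subsurface argument: the component $C_t$ would otherwise be a disk, or $S_t$ would be isotopic into the solid torus $T$) every face meeting $[-1,1]\times\{1\}$ would carry a label; since faces near $t=-1$ are labeled $1$ and faces near $t=1$ are labeled $2$, and $G$ contains no vertical segments, some vertical line $\{t\}\times[0,1]$ meets faces with both labels, producing disjoint essential disks on opposite sides of $S_t$ and contradicting strong irreducibility. Your boundary analysis near $s=0$ and the "combinatorics of how the spines meet $\gamma$" plays no role; the switch happens along the top edge.

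Second, your application of Lemma \ref{disk} with $Y=T_s$ is not justified. The hypothesis you must verify is that no $S_t\cap\bd T_s$ contains an essential non-meridional loop of $\bd T_s$, but the failure of alternative (1) only concerns $\bd T$. Pushing such a loop outward through $g^{-1}([s,1])$ moves $S_t$ to \emph{some} surface isotopic to $S$ meeting $\bd T$ badly; it does not produce one of the fixed sweepout surfaces $S_{t'}$, so it does not contradict the failure of (1). This gap is avoidable: the labels on interior faces can simply be \emph{defined} (a loop either bounds an essential disk in a handlebody or it does not), and Lemma \ref{disk} is only needed at $s=1$, where $Y=T$ and the hypothesis is exactly the failure of (1). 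If you restructure your argument as the contrapositive along the top edge, as above, both of your gaps close and you recover the paper's proof.
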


\begin{proof}
Assume that $S_t \cap \bd T$ does not contain a loop which is an essential non-meridinal loop in in $\bd T$ for any $t\in (-1,1)$.
Suppose, for contradiction, that $S_{t} \cap T$ contains an essential loop of $S_{t}$ for each $t\in(-1,1)$.  For each $t\in(-1,1)$, let $C_t$ be a component of $S_t \cap T$ that contains a loop which is essential in $S_t$.
First we will show that this implies that for each $t\in(-1,1)$ such that $g_t$ is a Morse function, there exists a simple loop in $S_t \cap \bd T$ which bounds a properly embedded, essential disk in one of the handlebodies bounded by $S_t$.
Suppose $g_t$ is a Morse function and suppose that there is no simple loop in $S_t \cap \bd T$ which is essential in $S_t$.  In particular, each loop in $C_t \cap \bd T$ bounds a disk in $S_t$.  We may isotope $S_t$ to eliminate any of these disks which are disjoint from the interior of $T$ and we still have that $C_t$ contains a loop which is essential in $S_t$.  If $C_t \cap \bd T$ is still non-empty, then $C_t$ must be a disk since we have eliminated any disks bounded by loops in $C_t \cap \bd T$ which are disjoint from the interior of $T$.  This contradicts our assumption that $C_t$ contains a loop which is essential in $S_t$.  If $C_t \cap \bd T$ is empty after eliminating disks outside $T$, then we have that $S_t$ is isotopic into $T$, giving us a contradiction.  Thus, for each $t\in(-1,1)$, if $g_t$ is a Morse function then there exists a simple loop in $S_t \cap \bd T$ which is essential in $S_t$. By Lemma \ref{disk} this implies that for each $t\in(-1,1)$ such that $g_t$ is a Morse function, there exists a simple loop in $S_t \cap \bd T$ which bounds a properly embedded essential disk in one of the handlebodies bounded by $S_t$.

For each component $A$ of $([-1,1] \times [0,1]) \setminus G$, choose a point $(t_A , s_A)$ in the interior of $A$.  If one of the loops in $g_{t_A}^{-1}(s_A)$ bounds a properly embedded essential disk in the handlebody $f^{-1}[-1,t_A]$, then label the component $A$ with a $1$.  If one of the loops in $g_{t_A}^{-1}(s_A)$ bounds a properly embedded essential disk in the handlebody $f^{-1}[t_A ,1]$, then label the component $A$ with a $2$.
If some component $A$ of  $([-1,1) \times [0,1]) \setminus G$ has more than one label, then some loop in $g_{t_A}^{-1}(s_A)$ bounds disks in both $f^{-1}[-1,t_A]$ and $f^{-1}[t_A ,1]$ implying that the Heegaard splitting is weakly reducible.

We have shown that for each $t\in(-1,1)$ such that $g_t$ is a Morse function, there exists a simple loop in $S_t \cap \bd T = g_t^{-1} (1)$ which bounds a disk in one of the handlebodies bounded by $S_t$.  In other words, any component of $([-1,1] \times [0,1]) \setminus G$ which meets $[-1,1] \times \{ 1 \}$ is labeled.

For $t$ near $-1$, the vertical line $\{t \} \times [0,1]$ must intersect a component labeled 1, because $S_t$ is near the spine of $f^{-1} [-1,t]$.  For $t$ near $1$, the vertical line $\{t \} \times [0,1]$ must intersect a component labeled 2, because $S_t$ is near the spine of $f^{-1} [t,1]$.  Thus there must be adjacent segments in $[-1,1] \times \{ 1 \} \setminus G$ which have different labels.  Since $G$ cannot contain any vertical segments, this implies that some vertical line $\{t \} \times [0,1]$ intersects two labeled components with different labels.  Then for some $s_1, s_2 \in [0,1]$, there is a loop in $g_{t}^{-1}(s_1)$ which bounds a disk in $f^{-1}[-1,t_A]$ and there is a loop in $g_{t}^{-1}(s_2)$ which bounds a disk in $f^{-1}[t_A ,1]$.  Since  $g_{t}^{-1}(s_1)$ and $g_{t}^{-1}(s_2)$ are disjoint, the Heegaard splitting is weakly reducible, giving a contradiction.

We have shown that there is a $\sigma \in(-1,1)$ such that $S_{\sigma} \cap T$ does not contain an essential curve of $S_{\sigma}$.
\end{proof}

\noindent\textit{Proof of Lemma \ref{annulus}.}
We will show that we may isotope $S$ so that $S \cap \bd T$ contains a loop which is essential and non-meridinal in $\bd T$.
Let $f : M \rightarrow [-1,1]$ be a smooth function such that $f^{-1} (-1)$ is a spine of $H_1$, $f^{-1} (1)$ is a spine of $H_2$, $f^{-1} (t)$ is a surface isotopic to $S$ for each $t\in(-1,1)$, and $f^{-1} (0) = S$.
Suppose that $S_t \cap \bd T$ does not contain a loop which is essential and non-meridinal in $\bd T$ for any $t\in (-1,1)$.
We have isotoped $S = S_0$ so that $S \cap T$ contains a simple loop $\omega$ which is essential in $T$. We have also shown that there is a $\sigma \in (-1,1)$ such that $S_{\sigma} \cap T$ does not contain an essential loop of $S_{\sigma}$ and therefore each loop in $S_{\sigma} \cap \bd T$ bounds a disk in $S_{\sigma}$.  Let $A$ be an annulus embedded in $M$ with one boundary component equal to $\omega$ and the other boundary component $l$ contained in $S_{\sigma}$.  This annulus exists because $S$ and $S_\sigma$ bound a surface times interval. Any loop in $S_\sigma \cap \bd T$ must bound a disk in $S_\sigma$ since $S_{\sigma} \cap T$ does not contain a loop which is essential in $S_{\sigma}$.  
We may isotope $A$ so that $l$ is disjoint from $S_{\sigma} \cap \bd T$ since loops in $S_{\sigma} \cap \bd T$ bound disks in $S_\sigma$.  The loop $l$ is now either disjoint from $T$ or contained in $T$ since it is disjoint from $\bd T$.  Since $l$ is essential in the 3-manifold $M$, $l$ must be essential in $S_\sigma$.  Since $S_{\sigma} \cap T$ does not contain an essential loop of $S_{\sigma}$, we must have that $l$ is contained in $M \setminus T$.
We may isotope $\omega$ by a small isotopy so that it is contained in the interior of $T$.  
We now have an annulus $A$ embedded in $M$ such that $\bd A = \omega \cup l$ with $\omega \subset T$ and $l \subset (M\setminus T)$.  Thus there must be a simple loop $l'$ in $A \cap \bd T$ which is essential in $A$.   The embedded annulus bounded by $l \cup l'$ can be used to isotope $S_\sigma$ so that $S_\sigma \cap T$ contains a loop which is essential and non-meridinal in $\bd T$.

We have shown that we may isotope $S$ so that $S \cap \bd T$ contains a loop which is essential and non-meridinal in $\bd T$.  Now we can let $N$ be a regular neighborhood of $\gamma$ contained in $T$ and disjoint from $\bd T$.  Since $S \cap \bd T$ contains a loop which is essential and non-meridinal in $\bd T$, there is an embedded annulus in $M \setminus N$ with boundary $\alpha \cup \alpha '$, where $\alpha$ is a simple essential non-meridinal loop in the boundary of $N$, and $\alpha '$ is contained $S$.
\hfill $\Box$

\section{Finding an isotopy}\label{isotopy}

\begin{lemma}\label{schultens}
Let $\gamma$ be a simple loop in a 3-manifold $M$. Let $M = H_{1} \cup_{S} H_{2}$ be a Heegaard splitting of $M$.  Let $\alpha$ be a simple essential non-meridinal loop in the boundary of a regular neighborhood $N$ of $\gamma$.  If there is an embedded annulus $A$ in $M$ disjoint from the interior of $N$ with boundary $\alpha \cup \alpha '$ where $\alpha ' \subset S$, then $\gamma$ is isotopic into $S$.
\end{lemma}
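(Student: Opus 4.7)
The plan is to place the annulus $A$ in general position with respect to the Heegaard surface $S$, simplify the intersection $A \cap S$ by a cut-and-paste argument, and then extract from the simplified picture an embedded annulus lying inside a single handlebody, from which the isotopy of $\gamma$ into $S$ can be read off.

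First, perturb $A$ so that it meets $S$ transversely; then $A \cap S$ is a disjoint union of $\alpha'$, simple closed curves in the interior of $A$, and arcs with both endpoints on $\alpha'$. On the annulus $A$, the interior circles are either inessential (bounding disks in $A$) or essential (parallel to $\alpha$ and $\alpha'$), while the arcs split into finitely many parallelism classes. Using standard innermost-disk and outermost-arc surgeries, together with irreducibility of $M$ (which holds in the hyperbolic case of interest), one removes all inessential loops and all arcs from $A \cap S$. The innermost inessential loop $c$ bounds a disk $D \subset A$ with interior disjoint from $S$; pairing $D$ with a disk bounded by $c$ in $S$ yields a sphere to isotope across, while outermost arcs give boundary-compressing half-disks used in the same way. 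The argument is routine once one commits to measuring complexity by $|A \cap S|$.

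After this simplification, $A \cap S$ consists of $\alpha'$ together with essential circles in $A$ parallel to the boundary. These circles cut $A$ into a chain of sub-annuli alternating between $H_1 \setminus \mathrm{int}(N)$ and $H_2 \setminus \mathrm{int}(N)$. Let $A_0$ be the outermost sub-annulus containing $\alpha$ as a boundary component; then $A_0 \subset H_i \setminus \mathrm{int}(N)$ for some $i \in \{1,2\}$, with $\partial A_0 = \alpha \cup c$ and $c$ an embedded loop in $S$.

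The final step is to apply Schultens' lemma from \cite{Schultens} to $A_0$ inside the handlebody $H_i$. The non-meridional hypothesis on $\alpha$ is crucial here: it guarantees that $\alpha$ is freely homotopic in $N$ to a nonzero multiple of the core $\gamma$, so gluing $A_0$ to a vertical annulus in $N$ yields an embedded surface witnessing the desired isotopy of $\gamma$ into $S$. I expect the main obstacle to be precisely this last step, since one has to guarantee that the non-meridional hypothesis combined with the annulus $A_0$ suffices to isotope $\gamma$ itself, rather than only a proper cable of $\gamma$, into the Heegaard surface; this is the technical heart of Schultens' original thin-position argument, and is where the hypotheses that $\alpha$ is essential and non-meridional on $\partial N$ enter in an essential way.
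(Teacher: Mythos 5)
There is a genuine gap, and you have in fact pointed at it yourself: your final step, ``apply Schultens' lemma to $A_0$ inside the handlebody,'' is not a reduction but a restatement of the entire content of the lemma. After your cut-and-paste simplification you are left with an embedded annulus running from an essential non-meridional curve $\alpha$ on $\partial N$ to a curve on $S$. Such an annulus only exhibits an isotopy of $\alpha$ --- which is freely homotopic to $\gamma^n$ for some $n\ge 1$ --- into $S$; when $n\ge 2$ it gives no isotopy of $\gamma$ itself, and ruling out this ``proper cable'' possibility is the whole point. Nothing in your reduction to a sub-annulus $A_0$ in one handlebody makes this any easier (note also that $N$ itself may meet $S$, so the configuration is not even intrinsic to a single handlebody). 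The paper's proof consists precisely of the argument you are deferring: it extends the embedded annulus to an immersed annulus $g:A\to M$ with $g(\alpha)=\gamma^n$ and $g|_{A\setminus\alpha}$ an embedding, puts $\gamma$ in thin position with respect to the sweepout $f$, and studies the singular foliation of $A$ by level sets of $h=f\circ g$. One eliminates every outermost essential saddle whose upper disk sits between two lower disks with disjoint images on $\alpha$ (and vice versa); Schultens' Proposition 3.1 then forces an outermost essential saddle whose two lower disks satisfy $g(D_1\cap\alpha)=g(D_2\cap\alpha)$, and the union $g(D_1\cup D_u)$ is an embedded disk isotoping $\gamma$ (not a power of it) into a level surface. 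This is the step your proposal must supply and does not.

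A secondary issue: your preliminary simplification of $A\cap S$ is not as routine as claimed. An innermost circle $c$ of $A\cap S$ bounding a disk $D\subset A$ can be removed by an isotopy across a ball only if $c$ also bounds a disk in $S$; if $c$ is essential in $S$, the sphere you propose to isotope across does not exist (you merely obtain a compressing disk for $S$, and the lemma as stated does not assume strong irreducibility). The paper avoids all of this by never decomposing $A$ along $S$; it works with the whole immersed annulus and the Morse-theoretic structure of $f\circ g$ directly.
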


The proof of Lemma \ref{schultens} is a thin position argument used by Schultens in \cite{Schultens} to show that exceptional fibers in Seifert manifolds are isotopic into a Heegaard surface (Lemma 4.1 in \cite{Schultens}).   See Johnson \cite{jessenotes} for a use of this argument to classify genus-one Heegaard splittings of lens spaces.  The only adjustment needed for our result is that the loop $\gamma$ can be put into thin position while keeping the annulus $A$ embedded.

\begin{proof}
Let $f : M \rightarrow [-1,1]$ be a smooth function such that $f^{-1} (-1)$ is a spine of $H_1$, $f^{-1} (1)$ is a spine of $H_2$, and $f^{-1} (t)$ is a surface isotopic to $S$ for each $t\in(-1,1)$.
Let $g: A \rightarrow M$ be an embedding of an annulus $A$ into $M$ with $\bd A = \alpha \cup \alpha '$ such that $g(\alpha)$ is an essential non-meridinal loop in the boundary of the regular neighborhood $N$ of $\gamma$, $g(\alpha) \cap N = \emptyset$, and $g(\alpha ') \subset S$.  Extend $g$ to an immersion (also called $g$) such that $g(\alpha) = \gamma^n$ and $g|_{A\setminus\alpha}$ is an embedding.

If $n=1$, then we are done, so assume that $n \ge 2$.  Let $h = f \circ g : A \rightarrow [-1,1]$.  We may assume that after a small isotopy $h|_{A\setminus\alpha}$ is a Morse function and that $h|_{\alpha}$ has no degenerate critical points.  Since $\alpha '$ is mapped to the level surface $S$, we may also assume that the singular foliation $\mathcal{F}$ of $A$ by level sets of $h$  consists of parallel circles in a neighborhood of $\alpha '$.  Isotope $\gamma$ and $g$ so that $\gamma$ is disjoint from the cores of the handlebodies in $M\setminus S$, while keeping $g|_{A\setminus\alpha}$ an embedding.

The singular foliation $\mathcal{F}$ contains an \textit{essential saddle} if it contains a saddle singularity $x$ such that the four arcs in the level set containing $x$ emanating from $x$ end on $\alpha$.  If $x$ is an essential singularity, then $x$ and the arcs emanating from $x$ cut off three disks from $A$.  If none of the disks cut off by an essential saddle $x$ and the arcs emanating from $x$ contain an essential saddle, then we call $x$ an \textit{outermost essential singularity}.  
If $\beta$ is an outermost level arc in the foliation $\mathcal{F}$ of $A$, then it cuts off a disk $D$.  Call $D$ an \textit{upper disk} if $h(x) > h(\beta)$ for all $x \in D\setminus\beta$ and call $D$ a \textit{lower disk} if $h(x) < h(\beta)$ for all $x \in D\setminus\beta$.   

Suppose that a level set of $\mathcal{F}$ contains an outermost essential saddle which 
splits off an upper disk $D_u$ between two lower disks $D_1$ and $D_2$ such that $g(D_1 \cap \alpha ) \cap g(D_2 \cap \alpha ) = \emptyset$. Note that since $\mathcal{F}$ consists of parallel circles in a neighborhood of $\alpha '$, we have that the disks $D_u$, $D_1$, and $D_2$ are disjoint from $\alpha'$. We may isotope $\gamma$ and the map $g$ to push $D_u$ below $D_1$ and $D_2$.  We may similarly isotope $\gamma$ and $g$ to eliminate a lower disk $D_l$ between two upper disks $D_3$ and $D_4$ such that $g(D_3 \cap \alpha ) \cap g(D_4 \cap \alpha ) = \emptyset$.  Isotope $\gamma$ and $g$ until no such triples of upper and lower disks exist.  We may choose these 
 isotopies so that $g|_{A\setminus\alpha}$ is still an embedding since the upper and lower disks are disjoint from a neighborhood of $\alpha'$.
If $\gamma \subset S'$ for a level set $S'$ of $f$, then we are done.  Otherwise the map $f|_{\gamma}$ has at least two critical points so that $h|_{\alpha}$ has at least $4$ critical points.

By Proposition 3.1 of \cite{Schultens}, after an arbitrarily small isotopy of the map $g$ near $\alpha$, $\mathcal{F}$ contains an \textit{outermost essential saddle}.
An outermost essential saddle must cut off either an upper disk between two lower disks or a lower disk between two upper disks. Without loss of generality, assume it cuts off an upper disk $D_u$ between two lower disks $D_1$ and $D_2$.  We have isotoped $\gamma$ and $g$ so that
we cannot have $g(D_1 \cap \alpha ) \cap g(D_2 \cap \alpha ) = \emptyset$.  Thus we must have $g(D_1 \cap \alpha ) = g(D_2 \cap \alpha )$.  This implies that $g( (D_1 \cap \alpha) \cup (D_u \cap \alpha) ) = \gamma$, so that $g(D_1 \cup D_u )$ provides a disk to
isotope $\gamma$ to $g( \bd (D_1 \cup D_u) - \alpha)$ which is contained in a level surface of $f$.  In other words, we may isotope $\gamma$ into a Heegaard surface parallel to $S$.
\end{proof}

\noindent{\textit{Proof of Theorem \ref{short}.}}
The proof of Theorem \ref{short} now follows from Lemma \ref{annulus} and Lemma \ref{schultens}. \hfill $\Box$\\

\textbf{Acknowledgements.}  This work was partially supported by NSF grant DMS-0135345 and NSF RTG grant 0602191. The author would like to thank Joel Hass for suggesting the use of bounded area sweepouts to prove Theorem \ref{short}.  The author would also like to thank Juan Souto, Jesse Johnson, and Jennifer Schultens for helpful conversations, and the referee for helpful suggestions.

%
%
%
\bibliographystyle{gtart}

\bibliography{tri}

\begin{thebibliography}{}
\providecommand\bibmarginpar{\leavevmode\marginpar}
\def\urlstyle#1{{\tt #1}}

\bibitem{cerf}
\textbf{J Cerf}, \emph{Sur les diff\'eomorphismes de la sph\`ere de dimension
  trois {$(\Gamma _{4}=0)$}}, Lecture Notes in Mathematics, No. 53,
  Springer-Verlag, Berlin (1968)

\bibitem{jessenotes}
\textbf{J Johnson}, \emph{Notes on {H}eegaard splittings},
  http://www.math.okstate.edu/~jjohnson/notes.pdf

\bibitem{johnson}
\textbf{J Johnson}, \emph{Locally unknotted spines of {H}eegaard splittings},
  Algebr. Geom. Topol. 5 (2005) 1573--1584

\bibitem{meyerhoff}
\textbf{R Meyerhoff}, \emph{A lower bound for the volume of hyperbolic
  $3$-manifolds.}, Canad. J. Math. 39 (1987) 1038--1056

\bibitem{Otal}
\textbf{J-P Otal}, \emph{Les g\'{e}od\'{e}siques ferm\'{e}es d'une
  vari\'{e}t\'{e} hyperbolique en tant que n{\oe}ds}, from: ``Kleinian groups
  and hyperbolic 3-manifolds (Warwick, 2001)'', London Math. Soc. Lecture Note
  Ser. 299, Cambridge Univ. Press, Cambridge (2003)  95--104

\bibitem{PittsRub}
\textbf{J\,T Pitts}, \textbf{J\,H Rubinstein}, \emph{Applications of minimax to
  minimal surfaces and the topology of $3$-manifolds}, from: ``Miniconference
  on geometry and partial differential equations, 2 (Canberra 1986)'', Proc.
  Centre Math. Anal. Austral. Nat. Univ. 12, Austral. Nat. Univ., Canberra
  (1987)  137--170

\bibitem{Scharlemann}
\textbf{M Scharlemann}, \emph{Local detection of strongly irreducible
  {H}eegaard splittings}, Topology Appl. 90 (1998) 135--147

\bibitem{rub-scharl}
\textbf{M Scharlemann}, \textbf{H Rubinstein}, \emph{Comparing {H}eegaard
  splittings of non-{H}aken $3$-manifolds}, Topology 35 (1996) 1005--1026

\bibitem{Schultens}
\textbf{J Schultens}, \emph{Heegaard splittings of {S}eifert fibered spaces
  with boundary}, Trans. Amer. Math. Soc. 347 (1995) 2533--2552

\bibitem{Souto}
\textbf{J Souto}, \emph{Short geodesics in hyperbolic compression bodies are
  not knotted} (2008)Preprint

\end{thebibliography}



\end{document}